\tikzset{mynode/.style={shape= circle, fill = white, inner sep = 0pt, outer sep = 0.3pt, minimum size = 4pt,draw,thick}}
\tikzset{myloop below/.style={thick,loop, out=-50, in = -130,min distance =6mm}}
\newtheorem{lemma}{Lemma}
\newtheorem{corollary}[lemma]{Corollary}
\newtheorem{proposition}[lemma]{Proposition}
\theoremstyle{definition}
\newtheorem{definition}[lemma]{Definition}
\newtheorem{remark}[lemma]{Remark}
\DeclareMathOperator\Aut{Aut}
\DeclareMathOperator\cayl{Cayl}
\DeclareMathOperator\inv{inv}
\DeclareMathOperator\schrei{Schrei}
\DeclareMathOperator\Stab{Stab}
\DeclareMathOperator\Star{Star}
\newcommand*{\Cayley}[2]{\cayl(#1;#2)}
\newcommand*{\Schreier}[3]{\schrei(#1,#2;#3)}
\newcommand*{\SchreierOr}[3]{\schrei_{\mathcal O}(#1,#2;#3)}
\newcommand*\Z{\mathbf{Z}}
\title{Up to a double cover, every regular connected graph is isomorphic to a Schreier graph}
\author{Paul-Henry Leemann\thanks{The~author was supported by NSF Grant No. $200021\_188578$.}}
\date{\today}
\begin{document}
\maketitle
\begin{abstract}
We prove that every connected locally finite regular graph is either isomorphic to a Schreier graph, or has a double cover which is isomorphic to a Schreier graph.
\end{abstract}
\textit{AMS classification number:}  05C25\\
\textit{Keywords:} Schreier graphs, Cayley graphs, regular graphs, perfect matchings, coverings
\section{Introduction}
A central tool in combinatorial and geometric group theory is the notion of the Cayley graph of a group $\Gamma$, which is a graph that encodes the structure of the group, see Section~\ref{Sec:1} for all the relevant definitions.
This construction is enlightening in two ways: on one hand it allows to see groups as geometric objects, while on the other hand it endows some graphs with an algebraic structure.
In both direction, it permits a better understanding of the objects under consideration.
It is therefore natural to ask which graphs can be realized as a Cayley graph of some group.
This question has been answered in 1958 by Sabidussi~\cite{MR0097068} who showed that a connected graph $G$ is isomorphic to a Cayley graph if and only if $\Aut(G)$ acts freely and transitively on the vertices of $G$.

It follows from Sabidussi's result that Cayley graphs are rather sparse among graphs, and thus too restrictive to study general graphs, even if we restrict ourself to the specific case of connected regular degree graphs. A more general notion of a ``graph of algebraic origin'' is hence needed.
A suitable generalization of Cayley graphs, is the concept of a Schreier graph associated to a pair $(\Gamma,\Lambda)$ where $\Lambda$ is a subgroup of $\Gamma$ (or equivalently associated to a transitive action of $\Gamma$ on some set $X$). In this context, Cayley graphs correspond to pairs $(\Gamma,\{1\})$ (equivalently to free and transitive actions).
Schreier graphs are always connected and of regular degree and one can thus wonder which connected graphs of regular degree are isomorphic to a Schreier graph.

The first step in this direction was made by Gross in his 1977 article~\cite{MR0450121}, where he proved that every finite, connected regular graph of even degree is isomorphic to a Schreier graph.
The result for infinite connected regular graphs of even degree follows by a compacity argument and was probably one of this well-known folklore result for long.
A written proof of the infinite version can be found in~\cite{MR2893544}, but it has since be rediscovered many times.
On the other hand, it is easy to find examples, even finite ones, of connected regular graphs of odd degree that are not isomorphic to a Schreier graph, see the left of Figure~\ref{Figure:Simple3reg}.
It hence remains to classify which connected regular graphs of odd degree are isomorphic to a Schreier graph. 
This question turns out to be equivalent to the existence of a perfect matching, an interesting subject in its own right, and also to the existence of a covering map to some particular graph, see Section~\ref{Sec:2}.
An interesting result in this direction is the fact that all transitive graphs (with at most countably many vertices and edges) are isomorphic to a Schreier graph.
This fact was first proved by Godsil and Royle for finite graphs~\cite{GodsilRoyle}, then extended to locally finite graphs by the author~\cite{LeemannThese}, and finally to countable graphs by Hamann and Wendland in the appendix of~\cite{2020arXiv200706432G}.

Even if some connected regular graphs of odd degree are not isomorphic to a Schreier graph, one can still ask if they roughly ``look like'' a Schreier graph.
One possible interpretation of the above is to ask for the existence of a finite degree covering from a Schreier graph to $G$.
The aim of this note is to provide a proof of the following result, which concludes the above discussion.
\begin{proposition}\label{Prop:Schreier}
Let $G$ be a $d$- regular connected graph.
Then either $G$ is isomorphic to a Schreier graph or $G$ has a double-cover $H$ which is isomorphic to a Schreier graph.
\end{proposition}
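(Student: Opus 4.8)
The plan is to reduce everything to the characterisation from Section~\ref{Sec:2}, namely that a connected $d$-regular graph is isomorphic to a Schreier graph exactly when $d$ is even (this is Gross's theorem, valid in the locally finite setting by the quoted compactness argument) or when $d$ is odd and $G$ admits a perfect matching, where for graphs with loops a loop is allowed to cover its vertex. With this in hand the statement becomes a clean dichotomy: if $G$ is already a Schreier graph there is nothing to do, so I may assume $G$ is not isomorphic to a Schreier graph. By the characterisation this forces $d$ to be odd and $G$ to have no perfect matching, and it is precisely in this remaining case that I must produce the double cover $H$.

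The key observation is that the very obstruction ``no perfect matching'' already supplies the structural feature needed to build a good cover. First I would record that a $d$-regular bipartite graph always has a perfect matching: this is Hall's theorem in the finite case (the neighbourhood of any set $A$ on one side receives $d|A|$ edges, so $d|N(A)| \ge d|A|$ and hence $|N(A)| \ge |A|$), and it persists for locally finite infinite graphs by the K\"onig--Hall / De Bruijn--Erd\H{o}s compactness argument already invoked for Gross's theorem. Consequently a graph with no perfect matching cannot be bipartite, so our $G$ is non-bipartite. I would then take $H = G \times K_2$, the bipartite (tensor) double cover, with vertex set $V(G) \times \{0,1\}$ and an edge joining $(u,0)$ to $(v,1)$ for every edge $uv$ of $G$, a loop at $v$ lifting to a pair of parallel edges between $(v,0)$ and $(v,1)$ so that the degree $2$ of the loop is preserved. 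The projection $(v,i) \mapsto v$ is a degree-two covering map, $H$ is visibly $d$-regular and bipartite, and, crucially, since $G$ is connected and non-bipartite, $H$ is connected.

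To conclude, $H$ is a connected $d$-regular bipartite graph, hence has a perfect matching by the observation above, and since $d$ is odd the characterisation of Section~\ref{Sec:2} shows that $H$ is isomorphic to a Schreier graph; as $H$ is a double cover of $G$, this is exactly the second alternative. I expect the main difficulty to lie less in the construction than in the two points where the infinite case must be controlled: verifying that regular locally finite bipartite graphs carry perfect matchings, and checking that the tensor double cover of a connected non-bipartite graph is connected (rather than splitting into two copies of $G$). The conceptual heart of the argument, and the step I would most want to get exactly right, is the interplay whereby the failure that prevents $G$ from being a Schreier graph, the absence of a perfect matching, is precisely what guarantees that $G$ is non-bipartite, which is in turn exactly the hypothesis making its bipartite double cover connected.
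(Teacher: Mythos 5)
There is a genuine gap, and it is precisely the point the paper takes care to handle: degenerated loops (half-edges). The characterisation you invoke at the outset --- even degree always gives a Schreier graph (Gross), odd degree gives one exactly when a perfect matching exists --- is only valid for graphs \emph{without degenerated loops}; both Gross's theorem and Lemma~\ref{Lemma:Isomorphism} carry this hypothesis explicitly. In the paper's setting, graphs may have degenerated loops, and the characterisation then fails: the right-hand graph of Figure~\ref{Figure:Simple3reg} is connected, $4$-regular, and \emph{not} isomorphic to a Schreier graph, contradicting ``even degree $\Rightarrow$ Schreier''. Your parenthetical fix (``a loop is allowed to cover its vertex'') does not rescue this, since the obstruction there is not about matchings of odd-degree graphs at all. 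Consequently, your key inference ``$G$ not a Schreier graph $\Rightarrow$ $d$ odd and $G$ has no perfect matching'' is unjustified, and for a graph such as that example your proof produces nothing: it would wrongly conclude that $G$ is already a Schreier graph and stop, whereas the proposition demands (and the paper supplies) a double cover of it that is a Schreier graph.

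The gap is repairable with one observation: any loop, degenerated or not, already prevents bipartiteness. So split instead on whether $G$ is bipartite. If $G$ has a degenerated loop it is non-bipartite, hence $H=G\otimes K_2$ is connected; moreover $H$ is bipartite and therefore has no loops of any kind, so your argument for $H$ (regular bipartite graphs are matchable, then Gross for even $d$ or Lemma~\ref{Lemma:Isomorphism} for odd $d$) applies verbatim. Note also that this is not how the paper itself proceeds: rather than the Schreier/non-Schreier dichotomy, the paper's Lemma~\ref{Prop:Main} takes the bipartite graph $K$ (either $G$ or its canonical double cover), extracts perfect matchings one at a time to decompose $K$ into $d$ pairwise orthogonal perfect matchings, and invokes Proposition~\ref{Prop:Matchings} to realize $K$ as a Schreier graph of $(\Z/2\Z)^{*d}$ in all cases, with no parity distinction and no appeal to Gross's theorem. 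That route is both more uniform and insensitive to degenerated loops from the start, which is exactly what your case analysis is missing.
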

While we were not able to find a reference to the above result in the literature, we do not claim any priority on it.
In fact, this note was inspired by the following remark, which can be found at the end of Section 7 of~\cite{MR2087782}: ``In fact up to the cover of degree $2$ any regular graph can be realized as a Schreier graph~\cite{MR1308046}''.
However,~\cite{MR1308046} seems to treat only the case of graphs of even degree.

\paragraph{}
Section~\ref{Sec:1} contains all the definitions as well as a discussion on the unusual concept of degenerated loop. The short Section~\ref{Sec:2} contains more materials on coverings and perfect matchings, as well as the proof of Proposition~\ref{Prop:Schreier}.

\paragraph{Acknowledgements}
The author is thankful to A. Georgakopoulos, R. Grigorchuk and M. de la Salle for comments on a previous version of this note.
\section{Preliminary definitions and results}\label{Sec:1}
Since one of our motivations is the study of Schreier graphs, we will look at graphs in a broad sense.
We allow multi-edges and loops as well as the less usual degenerated loops (also called half-edges, dangling edges or open edges).

Informally, a degenerated loop is the index $2$ quotient of a path of length~$1$, see Figure~\ref{Figure:DoubleCovers}.
Heuristicaly it can either be seen as an edge with only one end, or as a loop that add only one to the degree.
Allowing degenerated loops is necessary to treat the case of Schreier graphs of groups with generators of order~$2$.
\begin{definition}
A \emph{graph} is a $5$-tuple $(V,\vec E,\iota,\tau,\inv)$ where $V$ and $\vec E$ are sets (of \emph{vertices} and of \emph{arcs}) and $\iota,\tau\colon \vec E\to V$ and $\inv\colon \vec E\to\vec E$ are applications (\emph{initial vertex}, \emph{terminal vertex} and \emph{inverse arc}) such that $\iota\cdot\inv=\tau$, $\tau\cdot\inv=\iota$ and $\inv$ is an involution.
\end{definition}
There is a natural equivalence relation on arcs defined by $\vec e\sim\vec f$ if and only if $\vec e=\vec f$ or $\vec e=\inv(\vec f)$.
The quotient set $\vec E/\sim$ is denoted by $E$ and is the set of \emph{edges}.

A vertex $v$ and an edge $e=\{\vec e,\inv(\vec e)\}$ are \emph{adjacent} if $v=\iota(\vec e)$ or $v=\tau(\vec e)$, in which case we also say the $v$ is an \emph{end} of $e$.
An edge that is adjacent to only one vertex is called a \emph{loop}.
A loop $e=\{\vec e,\inv(\vec e)\}$ is \emph{degenerated} (or an \emph{half-edge}) if $\vec e=\inv(\vec e)$, that is if $\vec e$ is a fixed point of $\inv$.

\begin{remark}
Graph theorists usually forbid the existence of degenerated loops, but these special kinds of edges naturally arise in the context of Schreier graphs.
Indeed, when allowing degenerated loops, there is a bijection between the subgroups of $G$ (up to conjugacy) and graphs that are covered by $\Cayley{\Gamma}{S}$, see~\cite{MR3463202} for a proof.
\end{remark}

The \emph{degree} of a vertex is the number of arcs $\vec e$ such that $v=\iota(\vec e)$.
In particular, a non degenerated loop adds $2$ to the degree of its adjacent vertex, while a degenerated loop adds only $1$.
All graphs under consideration will be \emph{locally finite}, that is the degree of any vertex is finite.
Classical notions like \emph{paths} and \emph{connected components} are defined in a natural way.
In particular, a \emph{bipartite graph} has no loop.
Finally, for a vertex $v$ we define $\Star(v)\coloneqq\{\vec e\in \vec E\ |\ \iota(\vec e)=v\}$.

On pictures, we will represent degenerated loops  by dotted line and other edges (including non-degenerated loops) by plain lines.
\begin{figure}[htbp]
\centering
\begin{tikzpicture}[node distance=1cm,every node/.style=mynode]
\node (00){};
\node[fill=black] (01)[right of=00]{};
\node[fill=black] (10)[below of=00]{};
\node (11)[right of=10]{};
\node (v0)[below =1.5cm of 10]{};
\node[fill=black] (v1)[below =1.5cm of 11]{};
\graph[edge={thick}]{(00)--(01)--(11)--(10)--(00)};
\graph[edge={thick,bend left}]{(v0)--(v1)--(v0)};
\draw[->>,thick] (0.5,-1.3) -- (0.5,-2.1);
\begin{scope}[xshift=3.5cm]
\node (0){};
\node (1)[below of=0]{};
\node (v)[below =1.5cm of 1]{};
\graph[edge={thick, bend left}]{(0)--(1)--(0)};
\path	(v) edge[myloop below](v);
\draw[->>,thick] (0,-1.3) -- (0,-2.1);
\begin{scope}[xshift=2.5cm]
\node (0){};
\node (1)[below of=0]{};
\node (v)[below =1.5cm of 1]{};
\graph[edge={thick}]{(0)--(1)};
\path	(v) edge[myloop below, densely dotted,thick](v);
\draw[->>,thick] (0,-1.3) -- (0,-2.1);
\end{scope}
\end{scope}
\end{tikzpicture}
\caption{Double covers of some graphs; the middle and the right ones are canonical double covers.
The leftmost one is isomorphic to the double cover $\Cayley{\Z/4\Z}{\{\pm1\}}\twoheadrightarrow\Schreier{\Z/4\Z}{\Z/2\Z}{\{\pm1\}}$, the middle one to $\Schreier{\Z/4\Z}{\Z/2\Z}{\{\pm1\}}\twoheadrightarrow\Schreier{\Z/4\Z}{\Z/4\Z}{\{\pm1\}}$ and the rightmost one to $\Cayley{\Z/2\Z}{\{\pm1\}}\twoheadrightarrow\Schreier{\Z/2\Z}{\Z/2\Z}{\{\pm1\}}$.}
\label{Figure:DoubleCovers}
\end{figure}
A \emph{morphism} of graph from $(V,\vec E)$ to $(W,\vec F)$ is a map $\varphi\colon V\sqcup\vec E\to W\sqcup\vec F$ which is compatible with the graph structure.
For any vertex $v\in V$, this induces a map $\varphi_v\colon\Star(v)\to\Star\bigl(\varphi(v)\bigr)$.
A morphism $\varphi\colon G\to H$ between two graphs is a \emph{covering} if all the induced map $\varphi_v$ are bijections.
If moreover $\lvert\varphi^{-1}(v)\rvert=2$\footnote{The quantity $\lvert\varphi^{-1}(v)\rvert$ depends only on the connected component of $v$.} for every vertex, then it is called a \emph{double cover}.
It is an easy exercise to show that every $2d$-regular graph without degenerated loops covers a graph with $1$ vertex and $d$ loops.

Given a graph $G=(V,\vec E)$, its \emph{canonical double cover} is the tensor product $G\otimes K_2$ where $K_2$ has two vertices and an edge between them.
That is, the vertex set of $G\otimes K_2$ is $V\times \{0,1\}$, and an arc $\vec e$ from $v$ to $w$ is lifted to an arc from $(v,0)$ to $(w,1)$ and to an arc from $(v,1)$ to $(w,0)$.
In particular, a non-degenerated loop adjacent to $v$ is lifted to a pair of parallel edges between $(v,0)$ and $(v,1)$, while a degenerated loop adjacent to $v$ is lifted to a single edge between $(v,0)$ and $(v,1)$.
See Figure~\ref{Figure:DoubleCovers}.
A simple verification shows that $G\otimes K_2$ is a bipartite graph, which is a double cover of $G$. Moreover, $G\otimes K_2$ is connected if and only if $G$ is connected and not bipartite.

\begin{definition}
A \emph{matching} (also called a \emph{$1$-factor}) of a graph $G$ is a subgraph $M$ of $G$ such that every vertex of $M$ has degree $1$ in $M$.
A \emph{perfect matching} is a matching of $G$ that contains all vertices of $G$.
A graph is \emph{matchable} if it admits a perfect matching.
Two matchings $M_1$ and $M_2$ are \emph{orthogonal} if they share no edge.
\end{definition}
Observe that not every regular graphs of odd degree are matchable, even among graphs without degenerated loop.
See Figure~\ref{Figure:Simple3reg}.
However, every vertex-transitive graph of odd degree is matchable,~\cite{GodsilRoyle,LeemannThese}.
\begin{figure}[htbp]
\centering
\begin{tikzpicture}[every node/.style={mynode},scale=0.8]
\node(A) {};
\foreach \k in {0, 1, 2}{
	\begin{scope}[rotate=\k*120]
	\node[fill=white] (B\k) at (1,0){};
	\node (C\k) at (2,1){};
	\node (D\k) at (2,-1){};
	\node (E\k) at (2,0){};
	\node (F\k) at (3,0){};
	\end{scope}
	\graph[edge={thick}]{
	(A)--(B\k)--(C\k)--(F\k)--(D\k)--(B\k);
	(C\k)--(E\k)--(D\k);
	(E\k)--(F\k)
	};
}
\graph[edge={thick}]{(A)--(B0)};
\node (b0) at (1,0){};
\begin{scope}[xshift=7cm]
\node(A) {};
\foreach \k in {0, 1, 2}{
	\node (B\k) at (\k*120:1){};
	\node (C\k) at (\k*120-20:2){};
	\node (D\k) at (\k*120+20:2){};
	\graph[edge=thick]{
	(B\k)--(C\k)--(D\k)--(B\k)};
	\path (B\k) edge[loop, out=\k*120+160, in = \k*120+80,min distance =6mm, densely dotted,thick](B\k);
	\path (C\k) edge[loop, out=\k*120-20, in = \k*120-100,min distance =6mm, thick](C\k);
	\path (D\k) edge[loop, out=\k*120+100, in = \k*120+20,min distance =6mm,thick](D\k);
}
\graph[edge=thick]{
(B0)--(A)--(B1);
(B2)--(A);
};
\path (A) edge[loop, out=100, in = 20,min distance =6mm, densely dotted, thick](A);\end{scope}
\end{tikzpicture}
\caption{A simple $3$-regular graph (on the left) and a $4$-regular graph (on the right); both of them being not isomorphic to a Schreier graph.}
\label{Figure:Simple3reg}
\end{figure}

We finally recall the definition of Schreier graphs.
\begin{definition}
Let $\Gamma$ be a group with a symmetric generating set $S$ not containing the identity and let $\Lambda$ be a subgroup of $\Gamma$.
The corresponding (right) \emph{Schreier graph} $\Schreier{\Gamma}{\Lambda}{S}$ is the graph with vertex set the right cosets $\{\Lambda g\ |\ g\in\Gamma\}$ and with an arc from $\Lambda g$ to $\Lambda h$ for every $s$ in $S$ such that $\Lambda gs=\Lambda h$.
If $X$ is a set with a transitive right action $X\curvearrowleft \Gamma$, the corresponding (right) \emph{orbital graph} $\SchreierOr{\Gamma}{X}{S}$ is the graph with vertex set $X$ with an arc from $x$ to $y$ for every $s$ in $S$ such that $x.s=y$.
\end{definition}

These two definitions are the two faces of the same coin. Indeed, we have $\SchreierOr{\Gamma}{X}{S}=\Schreier{\Gamma}{\Stab_G(x)}{S}$ for every $x\in X$ and $\Schreier{\Gamma}{\Lambda}{S}=\SchreierOr{\Gamma}{\Lambda \backslash\Gamma}{S}$.
See Figure~\ref{Figure:DoubleCovers} for some examples.
Schreier graphs are generalizations of the so-called \emph{Cayley graphs}, where $\Cayley{\Gamma}{S}\coloneqq\Schreier{\Gamma}{\{1\}}{S}$.

In the graph $\Schreier{G}{H}{S}$ there is a loop at the vertex $Hg$ if and only if there exists $s$ in $S\cap g^{-1}Hg$.
This loop is degenerated if and only if $s$ is of order $2$.
If $G$ is a (finitely generated) group, then it admits a (finite) generating system without element of order $2$ if and only if it is not generalized dihedral~\cite{MR0498225}.
Moreover, even for non-generalized dihedral group, it is sometimes natural to look at generating set containing elements of order $2$.
This is the main reason why we allow degenerated loops in our definition of a graph.


\section{Coverings and matchability}\label{Sec:2}
First of all we have the following, which relates covers and perfect matchings:
\begin{lemma}\label{lemma:CoversMatch}
Let $G$ be a $(2d+1)$-regular graph without degenerated loop.
Then $G$ is matchable if and only if it covers a graph with one vertex, one degenerated loop and $d$ non-degenerated loops.
\end{lemma}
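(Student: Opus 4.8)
The plan is to prove both implications by directly manipulating the combinatorial data of a covering onto the target graph, which I will call $R$: it has a single vertex $w$, one degenerated loop and $d$ non-degenerated loops. Its star $\Star(w)$ consists of one self-inverse arc $\delta$ (coming from the degenerated loop) together with $2d$ arcs $\{\alpha_1,\inv(\alpha_1),\ldots,\alpha_d,\inv(\alpha_d)\}$ (two per non-degenerated loop), so $R$ is indeed $(2d+1)$-regular. Since $R$ has a single vertex, a covering $\varphi\colon G\to R$ amounts to an assignment of an arc of $\Star(w)$ to each arc of $G$, compatible with $\inv$, such that the induced map $\varphi_v\colon\Star(v)\to\Star(w)$ is a bijection for every vertex $v$; I would record this reformulation first.

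For the implication ``$G$ covers $R$ $\Rightarrow$ $G$ matchable'', I would start from a covering $\varphi$ and examine the preimage $\varphi^{-1}(\delta)$. Because $\varphi_v$ is a bijection, every vertex $v$ is the initial vertex of exactly one arc $a_v$ with $\varphi(a_v)=\delta$. The key point, and where the hypothesis ``no degenerated loop'' becomes essential, is to check that $a_v$ is not a loop: if $\tau(a_v)=v$ then both $a_v$ and $\inv(a_v)$ would lie in $\Star(v)$ and map to $\delta=\inv(\delta)$, forcing $a_v=\inv(a_v)$ by injectivity of $\varphi_v$, that is, a degenerated loop, which $G$ lacks. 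Once this is settled, compatibility with $\inv$ together with uniqueness yields $a_{\tau(a_v)}=\inv(a_v)$, so the edges $\{a_v,\inv(a_v)\}$ are genuine non-loop edges covering every vertex exactly once, hence a perfect matching.

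For the converse, suppose $G$ admits a perfect matching $M$. First I would note that no edge of $M$ is a loop, since a loop contributes $2$ to the degree of any subgraph containing it, whereas $M$ has degree $1$ at each vertex. Deleting $M$ therefore lowers every degree by exactly one, producing a $2d$-regular graph $G'$ with no degenerated loop. Here I would invoke the fact, recalled in Section~\ref{Sec:1}, that such a $G'$ covers the one-vertex graph $R'$ carrying $d$ non-degenerated loops, via some covering $\psi$. I then extend $\psi$ to all of $G$ by sending the two arcs of each matching edge to $\delta$. Compatibility with $\inv$ is immediate since $\delta$ is self-inverse, and at each vertex $v$ the $2d$ non-matching arcs biject via $\psi$ onto the loop-arcs of $\Star(w)$ while the single matching arc hits $\delta$, so $\varphi_v$ is a bijection and $\varphi$ is a covering.

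I expect the main obstacle to be the loop check in the forward direction: everything hinges on ruling out that the distinguished arc $a_v$ at a vertex is a loop, which is precisely where the absence of degenerated loops enters and which accounts for why the analogous statement fails for odd-degree graphs in general. The reverse direction is largely bookkeeping once the cited $2d$-regular covering result is granted; the only care required is to confirm that removing a perfect matching preserves regularity of degree $2d$ and introduces no degenerated loop.
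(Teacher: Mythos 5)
Your proof is correct and follows essentially the same route as the paper: the preimage of the degenerated loop gives the perfect matching in one direction, and in the other you delete the matching, invoke the fact that a $2d$-regular graph without degenerated loops covers the bouquet of $d$ loops, and extend the covering by sending matching edges to the degenerated loop. The only difference is that you spell out the verification (in particular ruling out loops in $\varphi^{-1}(\delta)$) that the paper leaves implicit.
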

\begin{proof}
If $\varphi\colon G\to R$ is a covering and $R$ has a degenerated loop $e$, then $\varphi^{-1}(e)$ is a perfect matching of $G$.

On the other hand, let $M$ be a perfect matching of $G$ and $G'$ the graph obtained by removing all edges of $M$ from $G$.
Then $G'$ is $2d$-regular without degenerated loop and hence covers $R'$ a graph with one vertex and $d$ non-degenerated loops.
Let $R$ be the graph obtained by adding a degenerated loop to $R'$.
The covering $G'\twoheadrightarrow R'$ naturally extends to $G\twoheadrightarrow R$ by sending all edges of $M$ to the degenerated loop of $R$.
\end{proof}
As an easy generalization we obtain
\begin{corollary}
Let $G$ be a $(2d+n)$-regular graph without degenerated loop.
Then $G$ admits $n$ pairwise orthogonal perfect matchings if and only if it covers a graph with one vertex, $n$ degenerated loop and $d$ non-degenerated loops.
\end{corollary}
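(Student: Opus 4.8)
The plan is to follow the two-directional argument of Lemma~\ref{lemma:CoversMatch} almost verbatim, simply tracking $n$ degenerated loops instead of a single one. I would prove both implications by setting up a correspondence between the degenerated loops of the target graph and the individual perfect matchings.

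For the forward direction, suppose $\varphi\colon G\to R$ is a covering, where $R$ has a single vertex $r$, degenerated loops $e_1,\dots,e_n$, and $d$ non-degenerated loops. I would show that each $\varphi^{-1}(e_i)$ is a perfect matching and that these matchings are pairwise orthogonal. The key point is that a degenerated loop $e_i$ consists of a single arc fixed by $\inv$; since $\varphi_v\colon\Star(v)\to\Star(r)$ is a bijection for every $v$, each vertex carries exactly one arc mapping into $e_i$. Because $\varphi$ commutes with $\inv$ and $G$ has no degenerated loop, this arc is not a fixed point of $\inv$ and hence joins $v$ to a distinct vertex, so the resulting edge set gives every vertex degree exactly $1$. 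Orthogonality is then immediate, as distinct loops $e_i\neq e_j$ have disjoint preimages.

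For the converse, suppose $G$ carries $n$ pairwise orthogonal perfect matchings $M_1,\dots,M_n$. I would delete all of their edges to obtain $G'$. Since the $M_i$ are orthogonal and each removes exactly one edge at every vertex, $G'$ is $2d$-regular and still has no degenerated loop, so it covers the graph $R'$ with one vertex and $d$ non-degenerated loops, exactly as in the proof of Lemma~\ref{lemma:CoversMatch}. I then form $R$ by attaching $n$ degenerated loops to $R'$, and extend $\varphi\colon G'\to R'$ to $\varphi\colon G\to R$ by sending every edge of $M_i$ to the $i$-th degenerated loop. Checking that this extension is again a covering reduces to a local count at each vertex: the $2d$ surviving arcs map bijectively onto $\Star(r)$, and the single $M_i$-arc at $v$ supplies the preimage of the $i$-th degenerated loop, so the enlarged $\varphi_v$ remains a bijection.

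The only genuinely delicate point, as in the lemma, is the bookkeeping around degenerated loops: I must verify that the preimage of a degenerated loop contains no loop of $G$, which uses that $G$ has no degenerated loop together with the bijectivity of $\varphi_v$, so that the preimage is a true perfect matching rather than merely a subgraph of degree $1$. Everything else is a routine transcription of the $n=1$ case, with ``$n$ orthogonal matchings'' playing the role of the single matching and the $n$ distinct degenerated loops encoding the orthogonality condition.
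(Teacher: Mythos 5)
Your proposal is correct and is exactly the intended argument: the paper states this corollary as an ``easy generalization'' of Lemma~\ref{lemma:CoversMatch} without writing out a proof, and your two directions (taking $\varphi^{-1}(e_i)$ as the $i$-th matching, with orthogonality from disjointness of preimages; and conversely deleting all $n$ matchings to reach a $2d$-regular graph, then re-attaching $n$ degenerated loops) transcribe the lemma's proof in the natural way. Your attention to the point that preimages of degenerated loops cannot contain loops of $G$ (via injectivity of $\varphi_v$ and the absence of degenerated loops in $G$) is precisely the bookkeeping the lemma's proof relies on implicitly.
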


By a theorem of Gross~\cite{MR0450121}, every finite $2d$-regular connected graph without degenerated loop is isomorphic to a Schreier graph of the free group~$F_d$.
This result extends by compacity to infinite $2d$-regular connected graph without degenerated loop, see~\cite{MR2893544} for a proof.
On the other hand, Gross' Theorem does not carry over graphs of odd regular degree, see Figure~\ref{Figure:Simple3reg}.
However, every connected transitive graph of degree $2d+1$ and without degenerated loop is isomorphic to a Schreier graph of $F_d*(\Z/2\Z)$,~\cite{GodsilRoyle,LeemannThese}.
For general connected regular graphs, the situation is more complicated.
\begin{lemma}\label{Lemma:Isomorphism}
Let $G$ be a connected $(2d+1)$-regular graph without degenerated loop.
Then $G$ is isomorphic to a Schreier graph if and only if it is matchable.
In this case it is isomorphic to a Schreier graph of $F_d*(\Z/2\Z)$, where $F_d$ is the free group of rank $d$.
\end{lemma}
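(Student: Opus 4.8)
The plan is to prove the two directions separately, using Lemma~\ref{lemma:CoversMatch} as the bridge in the hard direction and a one-line degree count in the easy one. For the forward implication, I would start from an isomorphism $G\cong\Schreier{\Gamma}{\Lambda}{S}$. Every vertex $\Lambda g$ has exactly one outgoing arc per generator, so its degree equals $\lvert S\rvert$; hence $\lvert S\rvert=2d+1$ is odd. Pairing each $s\in S$ with $s^{-1}$ then forces $S$ to contain at least one involution $a=a^{-1}$. For every vertex the arc labelled $a$ joins $\Lambda g$ to $\Lambda ga$, and since $G$ has no degenerated loop we have $\Lambda ga\neq\Lambda g$; as $a^2=1$ these arcs pair the vertices two by two, so the $a$-labelled edges form a perfect matching and $G$ is matchable.

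For the converse, which also yields the refinement on the group, I would assume $G$ matchable and apply Lemma~\ref{lemma:CoversMatch} to obtain a covering $\varphi\colon G\twoheadrightarrow R$, where $R$ has a single vertex carrying $d$ non-degenerated loops and one degenerated loop. The key observation is that $R$ is exactly the one-vertex Schreier graph $\Schreier{\Gamma}{\Gamma}{S}$ of $\Gamma=F_d*(\Z/2\Z)$ with $S=\{t_1^{\pm1},\dots,t_d^{\pm1},a\}$: the $t_i$ have infinite order and give the non-degenerated loops, while the involution $a$ gives the degenerated one. Labelling each arc of $G$ by the generator whose loop it maps onto (fixing once and for all an orientation of each loop of $R$ to separate $t_i$ from $t_i^{-1}$), the covering condition that every $\varphi_v$ is a bijection means that each vertex of $G$ has precisely one outgoing arc with each label. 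This produces permutations $\sigma_{t_1},\dots,\sigma_{t_d},\sigma_a$ of $V(G)$ with $\sigma_{t_i^{-1}}=\sigma_{t_i}^{-1}$ and, since the preimage of the degenerated loop is the perfect matching, $\sigma_a^2=\mathrm{id}$.

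The crux is to package these permutations into a genuine group action, and I expect this to be the only real subtlety. Because $F_d*(\Z/2\Z)$ has presentation $\langle t_1,\dots,t_d,a\mid a^2\rangle$, its sole relation is $a^2=1$, which is exactly the relation our permutations already satisfy; there is no further compatibility to check. Hence $t_i\mapsto\sigma_{t_i}$, $a\mapsto\sigma_a$ extends to a homomorphism $\Gamma\to\mathrm{Sym}(V(G))$, i.e.\ a right action of $\Gamma$ on $V(G)$, and this action is transitive since $G$ is connected. Therefore $G\cong\SchreierOr{\Gamma}{V(G)}{S}=\Schreier{\Gamma}{\Stab(v)}{S}$, a Schreier graph of $F_d*(\Z/2\Z)$. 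The point to emphasise is that this works precisely because $\Gamma$ is a free product of cyclic groups, so purely local covering data is automatically globally consistent; for a group with extra relations the covering property alone would not suffice. The leftover verifications—consistency of the labelling, transitivity from connectedness, and that the orbital graph recovers $G$ on the nose—are routine.
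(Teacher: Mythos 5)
The paper does not actually prove Lemma~\ref{Lemma:Isomorphism}; it defers to the author's thesis~\cite{LeemannThese} and to~\cite{MR0450121}, so there is no internal proof to compare against. Your argument is correct and is precisely the proof that the paper's setup invites: the forward direction via the parity of $\lvert S\rvert$ (a symmetric set of odd size avoiding the identity must contain an involution $a$, and absence of degenerated loops forces the $a$-edges to form a perfect matching), and the converse by feeding the covering from Lemma~\ref{lemma:CoversMatch} into a monodromy argument, where von Dyck's theorem applies because the only relation in $\langle t_1,\dots,t_d,a\mid a^2\rangle$ is already satisfied by $\sigma_a$. Two of the steps you dismiss as routine are worth a line each if this were written out. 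First, a homomorphism $\Gamma\to\mathrm{Sym}\bigl(V(G)\bigr)$ is a \emph{left} action; to get the right action needed for $\SchreierOr{\Gamma}{V(G)}{S}$ one either composes with inversion or checks the anti-homomorphism convention---harmless here since $S$ is symmetric and $a^{2}=1$ is inversion-invariant, but it should be said. Second, $\sigma_a$ is fixed-point-free: an $a$-labelled loop at $v$ would either be a degenerated loop of $G$ (excluded by hypothesis) or consist of two distinct arcs with the same image under $\varphi_v$ (contradicting injectivity of $\varphi_v$); this is exactly what guarantees that the orbital graph, like $G$, has no degenerated loop, so that the final identification of $G$ with the Schreier graph is exact rather than merely up to half-edges.
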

A proof of this fact can be found in~\cite{LeemannThese}, but the result itself was probably already general knowledge at that time and essentially (at least for finite graphs) follows from~\cite{MR0450121}.
A variation of Lemma~\ref{Lemma:Isomorphism} gives us
\begin{proposition}[\cite{LeemannThese}]\label{Prop:Matchings}
Let $G$ be a connected $(2d+n)$-regular graph without degenerated loop.
Then $G$ is isomorphic to a Schreier graph of $F_d*(\Z/2\Z)^{*n}$ if and only if it has $n$ perfect matchings that are pairwise orthogonal.
\end{proposition}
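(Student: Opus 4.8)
The plan is to treat this as the $n$-fold generalisation of Lemma~\ref{Lemma:Isomorphism} and to prove the two implications separately, mirroring the structure of Lemma~\ref{lemma:CoversMatch} and of its corollary. Throughout I would write $\Gamma=F_d*(\Z/2\Z)^{*n}$ and equip it with the symmetric generating set $S=\{x_1^{\pm1},\dots,x_d^{\pm1}\}\cup\{t_1,\dots,t_n\}$, where $x_1,\dots,x_d$ are the free generators (each of infinite order) and $t_1,\dots,t_n$ are the canonical involutions generating the $n$ copies of $\Z/2\Z$.

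For the forward implication, suppose $G\cong\Schreier{\Gamma}{\Lambda}{S}$. First I would isolate, for each $j$, the subgraph $M_j$ consisting of the arcs labelled by $t_j$, that is the edges of the form $\{\Lambda g,\Lambda g t_j\}$. Since $t_j$ has order $2$ it acts on the coset set as an involution, and the hypothesis that $G$ has no degenerate loop forces $\Lambda g t_j\neq\Lambda g$ for every $g$; hence this involution is fixed-point free and $M_j$ is a perfect matching. As each edge carries a single, well-defined label and edges with different labels are different elements of the edge set, the matchings $M_1,\dots,M_n$ share no edge and are therefore pairwise orthogonal. This direction is essentially bookkeeping and I expect no difficulty here.

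For the converse I would go through coverings. Assume $G$ has $n$ pairwise orthogonal perfect matchings. By the corollary following Lemma~\ref{lemma:CoversMatch}, $G$ covers the graph $R$ with a single vertex, $n$ degenerate loops and $d$ non-degenerate loops. The point is that $R$ is exactly the one-vertex Schreier graph $\Schreier{\Gamma}{\Gamma}{S}$: its $d$ non-degenerate loops record the free generators $x_i$ (for which $x_i\neq x_i^{-1}$), while its $n$ degenerate loops record the involutions $t_j$ (for which $t_j=t_j^{-1}$). By the covering theory of such labelled graphs --- with the convention that a degenerate loop is traversed by an order-$2$ generator --- the connected covers of $R$ are, up to isomorphism, precisely the Schreier graphs $\Schreier{\Gamma}{\Lambda}{S}$ as $\Lambda$ ranges over conjugacy classes of subgroups of $\Gamma$; see the bijection recalled in the Remark of Section~\ref{Sec:1} and \cite{MR3463202}. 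Since $G$ is connected, the covering $G\twoheadrightarrow R$ is connected, and we conclude $G\cong\Schreier{\Gamma}{\Lambda}{S}$ for some $\Lambda$.

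To keep the argument self-contained I would then exhibit $\Lambda$ concretely: fix a vertex $v_0$ of $G$, let each $t_j$ act on $V(G)$ through the fixed-point-free involution $\rho_j$ swapping the endpoints of the edges of $M_j$, and let the $x_i$ act through the $d$ permutations obtained by orienting a $2$-factorisation of the graph $G\setminus(M_1\cup\dots\cup M_n)$, which is $2d$-regular without degenerate loop (each vertex loses exactly $n$ distinct edges, by orthogonality). By the universal property of the free product, assigning the $x_i$ to arbitrary permutations and the $t_j$ to involutions defines an action of $\Gamma$ on $V(G)$; connectedness of $G$ makes this action transitive, its orbital graph is $G$ by construction, and taking $\Lambda=\Stab_{\Gamma}(v_0)$ yields $G\cong\Schreier{\Gamma}{\Lambda}{S}$. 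The one genuinely delicate point, and the step I expect to be the main obstacle, is the correct handling of the degenerate loops in the covering correspondence: one must verify that pulling back the labelling of $R$ along $\varphi$ makes each $t_j$ act as a \emph{fixed-point-free} involution (rather than creating loops) and that the resulting data assemble into a bona fide action of $F_d*(\Z/2\Z)^{*n}$; everything else is formal.
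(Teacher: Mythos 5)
Your proof is correct in both directions. Note that the paper itself contains no proof of this proposition: it is imported from~\cite{LeemannThese} with the remark that it is a variation of Lemma~\ref{Lemma:Isomorphism}, so there is no internal argument to compare against; what you wrote is exactly the proof that the paper's own machinery suggests --- the forward direction is the labelling bookkeeping you describe (the absence of degenerated loops forces each involution $t_j$ to act freely on cosets, giving a perfect matching, and distinct generator labels give orthogonality), while the converse follows either from the corollary to Lemma~\ref{lemma:CoversMatch} together with the covering correspondence of~\cite{MR3463202}, or, as in your self-contained variant, by assembling a transitive action of $F_d*(\Z/2\Z)^{*n}$ from the $n$ matching involutions and an oriented $2$-factorization of the remaining $2d$-regular graph (the paper's ``easy exercise'') and checking that its orbital graph is $G$. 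The one convention worth making explicit is that ``Schreier graph of $F_d*(\Z/2\Z)^{*n}$'' means with respect to the standard generating set $\{x_1^{\pm1},\dots,x_d^{\pm1},t_1,\dots,t_n\}$, as you assume and as the paper uses it in the proof of Lemma~\ref{Prop:Main}; in fact this is no real restriction, since by Grushko's theorem any symmetric generating set of size $2d+n$ of this group must consist of $d$ inverse pairs and $n$ involutions, so the forward direction is insensitive to the choice.
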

In order to prove Proposition~\ref{Prop:Schreier}, we will need the following fact, which follows (for example) from K\"onig's duality theorem for infinite bipartite graphs,~\cite{MR734985}.
It also follows, by compacity, from the similar statement for finite graphs.
\begin{lemma}
Every $d$-regular bipartite graph is matchable.
\end{lemma}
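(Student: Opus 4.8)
The plan is to reduce the whole statement to finite graphs, where it becomes a one–line application of Hall's marriage theorem, and then to recover the general (locally finite) case either by a compactness argument or by quoting the infinite form of König's theorem, exactly as the two references mentioned in the statement do. Throughout, fix a bipartition $V=A\sqcup B$; since the graph is bipartite it carries no loop, so every edge has exactly one end in $A$ and one in $B$, and I may assume $d\geq 1$ (a nonempty $0$-regular graph is not matchable, but that case does not occur in the application).

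First I would settle the finite case, which is where the regularity does all the work. Counting incidences, each vertex is the initial vertex of exactly $d$ arcs and each edge contributes one arc out of $A$ and one out of $B$, so $d\lvert A\rvert=d\lvert B\rvert$ and hence $\lvert A\rvert=\lvert B\rvert$. Next I would verify Hall's condition for $A$: given a finite $S\subseteq A$, the $d\lvert S\rvert$ arcs $\vec e$ with $\iota(\vec e)\in S$ all satisfy $\tau(\vec e)\in N(S)$, and the involution $\inv$ sends them injectively into the family of arcs with initial vertex in $N(S)$, which has size $d\lvert N(S)\rvert$; therefore $\lvert S\rvert\leq\lvert N(S)\rvert$. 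This counting is insensitive to multi-edges. Hall's theorem then yields a matching saturating $A$, and since $\lvert A\rvert=\lvert B\rvert$ this matching automatically saturates $B$ as well, so it is perfect.

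For a locally finite $G$ I would pass to the limit. Regarding subsets of edges as points of $\{0,1\}^{E}$, which is compact by Tychonoff, the partial matchings form a closed set and, because $G$ is locally finite, the condition ``$v$ is covered'' is closed for each vertex $v$; a perfect matching is a point common to all of these, so by the finite intersection property it suffices to cover every finite vertex set by a single partial matching of $G$. Intuitively this should hold because assigning weight $1/d$ to every edge already produces a fractional perfect matching (each vertex has total incident weight $1$), and one expects to round this to an integral matching covering any prescribed finite set; rigorously, the required finite coverings follow from the deficiency form of König's theorem applied in large finite subgraphs, the relevant deficiencies vanishing by the global Hall inequalities above. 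Alternatively, and more cleanly, one can simply invoke König's duality theorem for infinite bipartite graphs: it furnishes a matching $M$ together with a cover selecting one endpoint of each edge of $M$, and a short argument using $d$-regularity shows that an $M$-unsaturated vertex would force its entire neighbourhood into the cover and thereby contradict the duality, so $M$ must be perfect.

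The main obstacle is precisely this passage to infinite graphs. In an arbitrary bipartite graph Hall's condition does \emph{not} guarantee a perfect matching, so both local finiteness and regularity are essential, and the delicate point is to assemble the finite matchings into a single coherent global one rather than to produce them. This assembling is exactly what either the compactness argument, through its finite-covering step, or the cited infinite König duality theorem accomplishes, and I would lean on whichever of the two the surrounding text already invokes.
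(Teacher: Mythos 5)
Your overall strategy is exactly the one the paper has in mind: the paper gives no proof of this lemma, but attributes it to K\"onig's duality theorem for infinite bipartite graphs (Aharoni) or, alternatively, to compactness plus the finite statement, and your treatment of the finite case via Hall's theorem (regularity giving $\lvert A\rvert=\lvert B\rvert$ together with the Hall inequalities) is correct and complete. However, both of your bridges from the finite to the locally finite case have genuine gaps. In the compactness route, the step you must supply is a matching of $G$ covering an arbitrary finite vertex set $F$, and $F$ in general meets \emph{both} sides of the bipartition. The deficiency form of K\"onig's theorem (equivalently Hall's theorem), applied in a finite subgraph containing $F\cup N(F)$, produces a matching covering $F\cap A$ and a possibly \emph{different} matching covering $F\cap B$; it says nothing about covering both simultaneously, and this is where your sketch stops being a proof. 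To merge the two matchings you need an exchange argument: either the Mendelsohn--Dulmage theorem, or the standard analysis of the components of the union of two matchings (paths and even cycles, where one checks that a finite path component must have both endpoints on the same side of the bipartition, so that the restriction of one of the two matchings covers that component entirely). Equivalently, and perhaps most cleanly, run compactness twice to obtain a matching $M_A$ saturating $A$ and a matching $M_B$ saturating $B$, then apply this component argument once, globally, to $M_A\cup M_B$.

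The Aharoni route, as you sketch it, is not merely incomplete but false: the matching $M$ furnished by K\"onig duality need not be perfect in an infinite regular bipartite graph, so no ``short argument'' can show that it is. Concretely, let $G$ be the bi-infinite path, with vertex set $\Z$ and edges $\{i,i+1\}$; it is $2$-regular and bipartite. Take $M$ to consist of the edges $\{2k-1,2k\}$ and $\{-2k,-2k+1\}$ for $k\geq 1$, and let $C$ be the set of odd integers. Then $C$ is a vertex cover consisting of exactly one endpoint of each edge of $M$, so $(M,C)$ is a K\"onig pair, yet the vertex $0$ is unsaturated. Your proposed contradiction does not materialize: indeed $0\notin C$ forces $N(0)=\{-1,1\}\subseteq C$, and nothing is wrong with that. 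The counting that rules this out for finite graphs (growing the alternating tree from an unsaturated vertex $v$ gives sets $A^*\ni v$ and $B^*=N(A^*)$ with $d\lvert A^*\rvert\leq d\lvert B^*\rvert$ by regularity while $M$ injects $B^*$ into $A^*\setminus\{v\}$) yields no contradiction when $A^*$ and $B^*$ are infinite. So if you want to invoke Aharoni's theorem you must treat the deduction of matchability from it as a black box, as the paper implicitly does; the self-contained proof should instead go through the compactness route, repaired as above.
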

We now prove a more precise version of Proposition~\ref{Prop:Schreier}.
\begin{lemma}\label{Prop:Main}
Let $G$ be a $d$-regular connected graph.
If $G$ is bipartite, then it is isomorphic to a Schreier graph of $(\Z/2\Z)^{*d}$.
Otherwise, $G\otimes K_2$ is isomorphic to a Schreier graph of $(\Z/2\Z)^{*d}$.
\end{lemma}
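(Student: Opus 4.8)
The plan is to reduce both cases to a single statement: every connected $d$-regular bipartite graph $B$ is isomorphic to a Schreier graph of $(\Z/2\Z)^{*d}$. In the bipartite case one applies this to $B=G$ directly; in the non-bipartite case one applies it to $B=G\otimes K_2$. For the latter to be legitimate I would first record that $G\otimes K_2$ is $d$-regular (a double cover preserves vertex degrees, the lift of a degenerated loop contributing $1$ and that of a non-degenerated loop contributing $2$ to each of the two lifted vertices), that it is bipartite, and that it is connected precisely because we are in the case where $G$ is connected and not bipartite. Being bipartite, $B$ has no loops at all, in particular no degenerated loops, so the standing hypotheses of Proposition~\ref{Prop:Matchings} are satisfied.

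Next I would prove the central claim by exhibiting $d$ pairwise orthogonal perfect matchings of $B$ and invoking Proposition~\ref{Prop:Matchings} in the case where its free-group rank is $0$ and its number of matchings equals our $d$, so that the group $F_0*(\Z/2\Z)^{*d}$ is just $(\Z/2\Z)^{*d}$ and the regularity $2\cdot 0+d$ is the regularity of $B$. To produce the matchings I would argue by finite induction, peeling off one perfect matching at a time: since $B$ is $d$-regular and bipartite, the matchability lemma furnishes a perfect matching $M_1$; deleting its edges yields a graph that is again bipartite and now $(d-1)$-regular, hence again matchable, giving $M_2$; iterating $d$ times terminates with a $1$-regular bipartite graph, which is itself the final matching $M_d$. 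Matchings produced at distinct stages share no edge, so the resulting $d$ matchings $M_1,\dots,M_d$ are pairwise orthogonal.

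The only points requiring care are bookkeeping rather than genuine obstacles, and the subtlest is the treatment of the intermediate graphs in the infinite setting. One checks that deleting a perfect matching from a $k$-regular bipartite graph leaves a $(k-1)$-regular bipartite graph: bipartiteness is preserved under edge deletion, a perfect matching meets every vertex in exactly one edge so every degree drops by exactly one, and local finiteness is inherited since $d$ is finite. The essential observation is that the matchability lemma is applied to these intermediate graphs \emph{without} any connectedness hypothesis, so it does not matter that edge deletions may disconnect them; connectedness is needed only at the very end, for $B$ itself, and that is exactly what the first paragraph arranged. With the $d$ pairwise orthogonal perfect matchings in hand, Proposition~\ref{Prop:Matchings} delivers the desired isomorphism with a Schreier graph of $(\Z/2\Z)^{*d}$ in both the bipartite and the non-bipartite case.
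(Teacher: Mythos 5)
Your proof is correct and follows essentially the same route as the paper's: reduce to the bipartite graph $K$ (either $G$ itself or $G\otimes K_2$), peel off $d$ pairwise orthogonal perfect matchings one at a time via the bipartite matchability lemma, and conclude with Proposition~\ref{Prop:Matchings} in the case $F_0*(\Z/2\Z)^{*d}$. Your extra bookkeeping (connectedness of $G\otimes K_2$, absence of degenerated loops, and the observation that the intermediate graphs may be disconnected but the matchability lemma does not require connectedness) is left implicit in the paper's proof, and is a welcome clarification rather than a deviation.
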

\begin{proof}
If $G=(V,\vec E)$ is bipartite let $K\coloneqq G$, while if $G$ is not bipartite take $K$ to be its canonical double cover.

Now, the graph $K$ is bipartite and $d$-regular.
Let $M_1$ be a perfect matching of $K$ and let $K_1$ be the graph obtained from $K$ by removing the edges in $M_1$.
Then $K_1$ is bipartite of degree $d-1$ and hence has a perfect matching $M_2$ as soon as $d-1>0$.
It is trivial that $M_1$ and $M_2$ are orthogonal.
By repeating this we obtain $d$ pairwise orthogonal perfect matchings. By Proposition~\ref{Prop:Matchings} this implies that $K$ is isomorphic to a Schreier graph of $(\Z/2\Z)^{*d}$.
\end{proof}
\begin{corollary}
Let $G$ be a connected regular graph.
\begin{enumerate}
\item If $G$ has degree $2d$, then either $G$ (if it is bipartite) or $G\otimes K_2$ (otherwise) is isomorphic to a Schreier graph of $F_d$,
\item If $G$ has degree $2d+1$, then either $G$ (if it is bipartite) or $G\otimes K_2$ (otherwise) is isomorphic to a Schreier graph of $F_d*(\Z/2\Z)$.
\end{enumerate}
\end{corollary}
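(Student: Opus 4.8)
The plan is to read off the two items directly from Lemma~\ref{Prop:Main} and Proposition~\ref{Prop:Matchings}, the whole content being a parity bookkeeping: the even and odd cases are simply the two residues of the degree modulo~$2$.

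First I would fix the bipartite model exactly as in the proof of Lemma~\ref{Prop:Main}: set $K\coloneqq G$ when $G$ is bipartite and $K\coloneqq G\otimes K_2$ otherwise. In both situations $K$ has the same degree $r$ as $G$ and is bipartite, so it has no loop at all and in particular no degenerated loop. It is moreover connected --- trivially when $K=G$, and because $G$ is connected and not bipartite when $K=G\otimes K_2$. Hence $K$ is a connected $r$-regular graph without degenerated loop, which is exactly the standing hypothesis of Proposition~\ref{Prop:Matchings}.

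Next I would write the degree as $r=2d+n$ with $n\in\{0,1\}$ the parity of $r$, and exhibit $n$ pairwise orthogonal perfect matchings of $K$. When $n=0$ there is nothing to exhibit and the hypothesis of Proposition~\ref{Prop:Matchings} holds vacuously. When $n=1$ the graph $K$ is $r$-regular and bipartite, hence matchable, so it carries one perfect matching, and a single matching is a (degenerate) family of pairwise orthogonal matchings. In either case Proposition~\ref{Prop:Matchings} gives that $K$ is isomorphic to a Schreier graph of $F_d*(\Z/2\Z)^{*n}$. Taking $n=0$ yields $F_d*(\Z/2\Z)^{*0}=F_d$ in the even case $r=2d$, and taking $n=1$ yields $F_d*(\Z/2\Z)$ in the odd case $r=2d+1$; these are precisely the two items.

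Since the argument is essentially bookkeeping, the only places that require genuine care are those where a cited result could silently fail to apply. I would check that $K$ is indeed connected and free of degenerated loops, the canonical double cover being the one spot where connectedness could be lost and thus the only place the not-bipartite hypothesis is used. I would also make the instance $n=0$ explicit, since it rests on the convention that $(\Z/2\Z)^{*0}$ is the trivial group, so that $F_d*(\Z/2\Z)^{*0}=F_d$. Conceptually, the reason pairs of matchings behave like free generators while a lone matching behaves like an involution is that two orthogonal perfect matchings form a $2$-regular subgraph, a disjoint union of cycles and bi-infinite paths, which can be coherently oriented into a permutation --- the image of a free (hence infinite-order) generator --- whereas an unpaired matching only provides an order-$2$ permutation; this is exactly what Proposition~\ref{Prop:Matchings} packages internally.
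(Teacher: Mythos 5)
Your proof is correct, but it follows a genuinely different route from the paper's. The paper deduces the corollary from the \emph{conclusion} of Lemma~\ref{Prop:Main} --- the graph $K$ is isomorphic to a Schreier graph of $(\Z/2\Z)^{*r}$, where $r$ is the degree --- together with an additional fact cited from~\cite{LeemannThese}, namely that a Schreier graph of $(\Z/2\Z)^{*2d}$ (respectively $(\Z/2\Z)^{*(2d+1)}$) without degenerated loop is isomorphic to a Schreier graph of $F_d$ (respectively $F_d*(\Z/2\Z)$); this converts the free product of $r$ involutions into the stated groups. You instead bypass the conclusion of Lemma~\ref{Prop:Main} entirely, reusing only its construction of $K$, and apply Proposition~\ref{Prop:Matchings} directly with the decomposition $r=2d+n$, $n\in\{0,1\}$: this requires exhibiting only $n$ perfect matchings rather than $r$ pairwise orthogonal ones, the odd case being supplied by the lemma that every regular bipartite graph is matchable, and the even case being vacuous. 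Your route is more self-contained: it stays within the numbered results of the paper and dispenses both with the unnumbered conversion fact and with the iterated matching-removal argument of Lemma~\ref{Prop:Main}. The only delicate point is your reading of Proposition~\ref{Prop:Matchings} at $n=0$, where $(\Z/2\Z)^{*0}$ must be interpreted as the trivial group so that the statement degenerates to Gross's theorem; you flag this explicitly, and it is harmless --- the paper makes the symmetric degenerate reading $d=0$ (so that $F_d$ is trivial) in its own proof of Lemma~\ref{Prop:Main} --- but a reader uneasy with it can close the even case by citing Gross's theorem~\cite{MR0450121} and its extension to infinite graphs directly, as recalled at the start of Section~\ref{Sec:2}.
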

\begin{proof}
This follows from Lemma~\ref{Prop:Main} and the fact that a Schreier graph of $(\Z/2\Z)^{*2d}$ (respectively of $(\Z/2\Z)^{*(2d+1)}$) without degenerated loop is isomorphic to a Schreier graph of $F_d$ (respectively of $F_d*(\Z/2\Z)$),~\cite{LeemannThese}.
\end{proof}
Observe that a Schreier graph is necessarily connected and of regular degree.
On the other hand, there are examples of connected regular graphs of odd degree and without degenerated loop that are not isomorphic to a Schreier graph, see the left of Figure~\ref{Figure:Simple3reg}.
There are also examples of connected regular graphs of even degree (but with at least one degenerated loop) that are not isomorphic to a Schreier graph, see the right of Figure~\ref{Figure:Simple3reg}.
Finally, it isn't possible to ensure that $G$ has a double cover isomorphic to a Schreier graph.
Indeed, if $T$ is a regular tree, then its only double cover is the disjoint union of two copies of $T$, which is not connected and hence not isomorphic to a Schreier graph.
Hence, Proposition~\ref{Prop:Schreier} is the ``best possible'' for locally finite graphs.
%
%
%
%
%
\providecommand{\noopsort}[1]{} \def\cprime{$'$}

\textsc{Paul-Henry Leemann, Institut de Math\'ematiques, Universit\'e de Neuch\^atel, Rue Emile-Argand 11, 2000 Neuch\^atel, Switzerland}
\\\textit{Email address:} \texttt{paul-henry.leemann@unine.ch}

\end{document}